\title{A general encryption scheme using two-sided multiplications with its cryptanalysis  }
\author{Vitali\u\i\ Roman'kov}
\address{Institute of Mathematics and Information Technologies\\Dostoevsky Omsk State  University}
\curraddr{}
\email{romankov48@mail.ru}
\newtheorem{theorem}{Theorem}[section]
\newtheorem{lemma}[theorem]{Lemma}
\theoremstyle{definition}
\newcounter{comcount}
\date{}
\begin{document}

\maketitle

\begin{abstract}
We show that many known schemes of the  public key exchange protocols in the algebraic cryptography, that use two-sided multiplications, are the specific cases of the general  scheme of such type. In  most cases,  such schemes are built on platforms that are subsets of the linear spaces. They have been repeatedly compromised by the linear decomposition method  introduced by the first  author.   The method allows to compute the exchanged keys without computing the private data and therefore without  solving the algorithmic problems on which the assumptions are based.  We demonstrate that this method can be successfully applied to the general scheme, thus it is in some sense universal. 
\end{abstract}

\section{Introduction}
\label{se:intro}

We show that many known schemes of the  public key exchange protocols in the algebraic cryptography, that use two-sided multiplications, are the specific cases of the general  scheme of such type. In  most cases,  such schemes are built on platforms that are subsets of  linear spaces, then they can be  compromised by the linear decomposition method  introduced by the first  author.   See for instance the  monograph
\cite{RomAC}, and papers  \cite{Roman}, \cite{MR}, \cite{RomMen}, \cite{GKR}, \cite{Romshield}.  The method allows to compute the exchanged keys without computing the private data and therefore without  solving the algorithmic problems on which the assumptions are based.  We demonstrate that this method can be successfully applied to the general scheme, introduced in this paper, thus it is in some sense universal. 

Some of the schemes under investigation were proposed by Andrecut \cite{A},  Wang et al.  \cite{WW},   Stickel \cite{S}, B. and T. Harley \cite{HH}, \cite{H}, Shpilrain and Ushakov  \cite{SU}.  Some other schemes were described in \cite{MSU1} and   \cite{MSU2}. The schemes using conjugation, for example the well-known scheme by Ko, Lee and al. \cite{KL}, that is a non-commutative version of the classical Diffie-Hellman scheme (see \cite{Romintro}), can be treated as schemes of the investigated type too. 

The general scheme proceeds as follows. Let $G$ be an  algebraic system with associative multiplication, for example group,  chosen as the platform. Further in the paper, $G$ is a group. We assume that $G$ is a subset of a finitely dimensional linear space $V$. Firstly, a set of public elements   $g_1, ..., g_k\in G$ is established. Then the correspondents, Alice and Bob, publish sequentially elements of the form $\phi_{a,b}(f)=afb; a, b \in G,$ where  $f\in G$ is   a given or previously built element. The parameters $a,b$ are private. The exchanged key has the form  
\begin{equation}
\label{eq:1}
K=\phi_{a_l, b_l}(\phi_{a_{l-1},b_{l-1}}( ... (\phi_{a_1,b_1}(g_i)...)) = a_la_{l-1} ... a_1g_ib_1 ... b_{l-1}b_l.
\end{equation} 
We suppose that Alice chooses parameters $a, b$  in a given finitely generated subgroup $A$ of $G$, and Bob picks  up parameters $a, b$ in a finitely generated subgroup $B$ of $G$ to construct their transformations of the form $\phi_{a,b}.$  Then, under some natural assumptions about $G, A$ and $B,$ we show that  each intruder can efficiently calculate the exchanged key $K$ without calculation the transformations used in the scheme.

Foundations of the linear decomposition method can be found in \cite{RomAC} (see also \cite{Roman} and \cite {MR}).   It can be applied only when the platform group $G$ is a part of a finite-dimensional linear space $V$. For example, $G\leq $ GL$_n(\mathbb{F})$ is a linear group of size $n$ that is a subset of the full matrix algebra M$_n(\mathbb{F})$ over a field $\mathbb{F}.$  In  many other cases when the platform group $G$ is not linear, or $G$ is linear but the dimension of $V$ is too large,
the linear decomposition method can be changed by the non-linear decomposition method (see \cite{Romnon}). It works when the platform group $G$ is a finitely generated nilpotent group or, more generally, polycyclic group. The class of polycyclic groups has been many times proposed as a source of good platforms for cryptographic schemes and protocols. See \cite{EK}, \cite{GK}, \cite{Pol}.

The transformations under discussion satisfy the equalities  $\phi_{a,b}\circ \phi_{c,d} = \phi_{ca,bd}$ and include  the unit element  $\epsilon = \phi_{1,1}$. The inverse of $\phi_{a,b}$ is $ \phi_{a^{-1}, b^{-1}}.$ 

Usually in the considering schemes, there are two finitely generated subgroups $A, B \subseteq G$, where the correspondences pick up their parameters. Alice chooses randomly elements in $A$, and Bob picks up randomly parameters in $B.$  In many schemes all elements of the subgroups  $A$ and $B$  are assumed  to be pairwise commuting, i.e., for every $a\in A$ and every $b\in B$ we have $ab = ba.$  Then all the transformations generate the subgroup $S(G,A,B)$ of $G.$ 

To get the exchanged key $K$ in (\ref{eq:1}) we need to apply to element 
 $g_i$  the transformation  $\phi_{u,v},$ where  $u = a_la_{l-1} ... a_1, v =  b_1 ... b_{l-1}b_l.$ We assume that all the transformations  $\phi_{a_j, b_j}$  have been used in the public data. In other words, for every pair  $a_j, b_j$ the list of public data contains  $c$ and  $d$  such that  $d = \phi_{a_j,b_j}(c).$ Then also  $c = \phi_{a_j,b_j}^{-1}(d),$ thus the transformation $\phi_{a_j,b_j}^{-1}$ can be considered as used too.  Alice and Bob compute $K$ using their private data. Alice knows a part of all transformations $\phi_{a_j,b_j},$ and Bob knows the complement of this part. In the follows section we'll show how we can efficiently compute $K$ under some natural assumptions on $G, A$ and $B$ without knowing of any of the private data. Note, that we don't compute any pair of elements $a_j, b_j.$  It turns out  that it doesn't matter who of the correspondents chooses concrete private elements $a_j, b_j$,  parameters of the transformation  $\phi_{a_j,b_j}$.  
Sometimes some schemes use  sums of elements $\varphi_{a,b}(f)$ (see \cite{H} and \cite{HH}). A variation of the method of linear decomposition allows to compute the exchanged key in these cases too (see Example 3 below). 

\section{Main Lemmas}

Let  $G$ be a platform group in a key exchanged scheme. Suppose that $G$ is a subset of a finite dimensional linear space $V.$ Two correspondents, Alice and Bob, agree about an element $h \in G$ and two finitely generated subgroups  $A$ and $B$      of  $G$ given by their finite generating sets.  Suppose that each element $a\in A$  commutes with every element $b \in B$. All these data are public. 

 Then the correspondences beginning wth $h$ repeatedly publish elements $\phi_{a_i,b_i}(u) = a_iub_i$, where $a_i, b_i \in A$  (Alice), and $\phi_{c_j,d_j}(u) = c_jud_j,$ where $c_j, d_j \in B$ (Bob), where  $u$ is one of the given or previously constructed elements. The exchanged key has the form
\begin{equation}
\label{eq:2}
K = \phi_{f_1, g_1}^{\epsilon_1}(\phi_{f_2, g_2}^{\epsilon_2}( ... (\phi_{f_t, g_t}^{\epsilon_t} (h) ... )),
\end{equation}
\noindent  where every pair  $(f_r,g_r)$ coincides either with a pair of the form  $(a_i,b_i)$, or with the pair of the form   $(c_j,d_j)$, $\epsilon_r\in \{\pm 1\}.$

The following lemma shows how we can efficiently construct bases of linear subspaces of  $V$, that are generated by elements of $G$ of the certain form. Different versions of this lemma  have been proved in  \cite{RomAC}, \cite{Roman} and   \cite{MR}.  
\begin{lemma}
\label{le:1}
Let   $A=$ gp($a_1, ..., a_k$) be a finitely generated subgroup of group $G$, that is a subset of a finite dimensional linear space $V$ over a field $\mathbb{F},$ and  $h$ be a fixed element of $G.$ Suppose that all main computations over $V$, i.e., addition, multiplication to scalar, can be efficiently done . Then each finite set of linear equations over $\mathbb{F}$ can be efficiently solved. Then we can efficiently construct a base  $E=\{e_1, ..., e_s\}$  of the linear subspace Lin($AhA$), generated by all elements of the form $ahb,$ где $a, b\in A.$
\end{lemma}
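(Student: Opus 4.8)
The plan is to construct $E$ by a saturation (dimension-increasing) procedure. The guiding observation is that $\mathrm{Lin}(AhA)$ coincides with the smallest $\mathbb{F}$-subspace $W \subseteq V$ that contains $h$ and is stable under all $2k$ left multiplications $v \mapsto a_i^{\pm 1} v$ and all $2k$ right multiplications $v \mapsto v a_i^{\pm 1}$, $1 \le i \le k$. First I would establish this characterization by proving two inclusions. Since $\mathrm{Lin}(AhA)$ already contains $h = 1 \cdot h \cdot 1$ and is itself stable under these multiplications (because $a_i^{\pm 1}(ahb) = (a_i^{\pm 1}a)hb \in AhA$, and symmetrically on the right), it contains the smallest such $W$. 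Conversely, writing an arbitrary $a \in A$ as a word $a_{i_1}^{\varepsilon_1}\cdots a_{i_m}^{\varepsilon_m}$ in the generators, and likewise for $b \in A$, an induction on the combined word length shows that $ahb$ is obtained from $h$ by successive generator multiplications, hence lies in every stable $W$ containing $h$; therefore $\mathrm{Lin}(AhA) \subseteq W$, and the two subspaces agree.

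Granting this characterization, the algorithm is straightforward. I would maintain a set of vectors that is kept linearly independent, initialized to $\{h\}$ (discarding it if $h = 0$). At each round I apply each of the $4k$ multiplication operators to every current basis vector; for each resulting vector $w$ I test whether $w$ lies in the span of the vectors collected so far by solving a linear system over $\mathbb{F}$, which is efficient by hypothesis. If $w$ is independent of them it is adjoined to the basis. The round is repeated until an entire pass produces no new independent vector.

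For termination and efficiency, note that each completed round either strictly increases the dimension of the current span or leaves the basis unchanged; since the span sits inside the finite-dimensional $V$, at most $\dim V$ genuinely productive rounds can occur, so the procedure halts after $O(\dim V)$ rounds, each costing polynomially many vector operations and linear solves, all efficient by assumption. When a pass is unproductive, the current span is stable under all $a_i^{\pm 1}$ on both sides and contains $h$, so by the characterization above it equals $\mathrm{Lin}(AhA)$, and the accumulated independent vectors form the desired base $E$. The main point requiring care is the correctness of this fixed point --- that closure under the finitely many generator multiplications (a local condition) really captures the full set $\{ahb : a,b \in A\}$ (a global condition); this is precisely the inductive argument of the first paragraph, and it is what lets a finite computation certify membership in $\mathrm{Lin}(AhA)$.
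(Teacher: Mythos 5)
Your proposal is correct and follows essentially the same route as the paper: both build the base by iterated saturation under generator multiplications $v \mapsto a_i^{\pm 1} v$, $v \mapsto v a_i^{\pm 1}$ starting from $h$, test linear independence by solving linear systems, and justify termination and correctness by the same fixed-point observation (an unproductive pass means the current span is invariant, hence already equals $\mathrm{Lin}(AhA)$, and at most $\dim V$ productive passes can occur). Your explicit characterization of $\mathrm{Lin}(AhA)$ as the smallest invariant subspace containing $h$ is just a cleaner packaging of the paper's argument with the sets $X^j h X^j$; the remaining differences (one-sided versus two-sided multiplications per round, processing all basis vectors versus only the newly added ones) are implementation details that do not change the argument.
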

\begin{proof}
Consider the arbitrary ordered, beginning with $h,$ set of all elements of the form   $c^{\epsilon}hd^{\eta },$ where $\epsilon , \eta \in \{\pm{1}\},$ and  $c, d$ are elements of the form  $a_i$, or $1$. This set is called the first list and is denoted as  $L_1$.  The following operations will give a part $\{e_1, e_2, ...\}$ of the constructing base $E$. This part is a base of the linear subspace  Lin($L_1$):

1) Let $e_1 = h.$

2) Let the elements  $\{e_1, ..., e_t\}$ of the base  $E$ have been constructed.  We take the following element $c^{\epsilon}hd^{\eta }$ in $L_1.$  If it is linearly depended with the constructed elements we delete it. If it doesn't happen we add it to the set of constructed elements, i.e., it is included to $E.$  

When $L_1$ is over we form a new arbitrary ordered list $L_2$  consisting of all elements of the form  $c^{\epsilon}e_jd^{\nu}$, where $e_j$ is an element of the part of $E$, that has been constructed after $L_1$ ended (with exception $e_1$). 

Further, we consequently consider the elements of $L_2$ and operate as in  2). After $L_2$ is over, and we get a part of  $E$, that is a base of the linear subspace of $V$ generated by  $L_1$ and $L_2$, we construct the third list, and so on. 

3) The process ends when the operation with a list  $L_i$ doesn't add any new element of $E.$  

To explain the assertion 3) we note that every new list consists of the elements of the previous list multiplied in the both sides to the generating elements of   $A$, or to their inverses (one of these factors can be $1$).  Let  $X=\{a_1^{\pm 1}, ..., a_k^{\pm 1}, 1\}.$ Then  $L_1\subseteq XhX,$ $L_2\subseteq X^2hX^2,$ and so on. If after operation with $L_{i+1}\subseteq X^{i+1}hX^{i+1}$ no one element was added to $E,$ then  $L_{i+1}$ lies in the subspace generated by all previously constructed elements of $E,$ i.e.,  $X^{i+1}hX^{i+1}\subseteq  Lin(\cup_{j=1}^{i}X^jhX^j).$ Then  
$X^{i+2}hX^{i+2}\subseteq X(Lin(\cup_{j=1}^{i}X^jhX^j))X \subseteq Lin(\cup_{j=1}^{i+1}X^jhX^j)\subseteq Lin(\cup_{j=1}^{i}X^jhX^j).$ Hence operations with $L_{i+2}$ doesn't add new elements to $E.$ It is clear, that the considering subspace coincides with the subspace generated by the all lists. It follows that the number of all lists that add new elements to $E$ doesn't exceed the dimension of $V.$  

\end{proof}

The following lemma is a key statement for the forthcoming cryptanalysis.  We suppose that all given above agreements are satisfied. 
\begin{lemma}
\label{le:2}
Let $G$ be a group, that is a subset of a finite dimensional linear space  $V$ over 
a field $\mathbb{F}.$ Assume that all assumptions about main operations over $V$ given in the Lemma \ref{le:1} are satisfied. Let $$v =\phi_{a,b}(u),$$
\noindent
where  $a,b \in A$  are Alice's private parameters (similar statement is true for Bob's private parameters). 

Then for every element of the form $w = \phi_{c,d}(u),$ where  $c, d \in B$  
(in other words $w\in BuB$), we can efficiently construct 
 $z = \phi_{a,b}(w)$  based on the structure of $V.$ 
\end{lemma}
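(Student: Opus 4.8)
The plan is to exploit two structural facts. First, $\phi_{a,b}\colon x\mapsto axb$ is a \emph{linear} transformation of the ambient space $V$, since left and right multiplication by fixed elements of $G$ extend to linear maps on $V$ (as in the matrix-algebra model underlying all the examples). Second, the standing hypothesis that every element of $A$ commutes with every element of $B$ will let me evaluate $\phi_{a,b}$ on any element of $BuB$ using only the publicly known $v=aub$, never touching the secret $a$ or $b$ individually.

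First I would apply Lemma \ref{le:1} with $B$ in place of $A$ and $u$ in place of $h$, obtaining a basis $E=\{e_1,\dots,e_s\}$ of the subspace $\mathrm{Lin}(BuB)$. The construction there produces each basis vector as an explicit single product $e_i=c_iud_i$ with $c_i,d_i\in B$ recorded as words in the generators of $B$; it is important to retain these words and not merely the vectors, because the next step reads them off.

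The key step is the evaluation of $\phi_{a,b}$ on each basis element. I compute $\phi_{a,b}(e_i)=ac_iud_ib$, and then slide $a$ past $c_i$ and $b$ past $d_i$ using the elementwise commutativity of $A$ and $B$, giving $ac_iud_ib=c_i(aub)d_i=c_ivd_i$. The right-hand side involves only the known quantities $c_i$, $v$, $d_i$, so $\phi_{a,b}(e_i)$ is fully computable despite $a,b$ being secret. Finally I invoke linearity: since $w\in BuB\subseteq\mathrm{Lin}(BuB)$, I solve the linear system expressing $w=\sum_{i=1}^s\lambda_ie_i$ over $\mathbb{F}$ (feasible by the hypotheses inherited from Lemma \ref{le:1}), and then $z=\phi_{a,b}(w)=\sum_i\lambda_i\phi_{a,b}(e_i)=\sum_i\lambda_ic_ivd_i$, where every term on the right is known. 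The symmetric statement for Bob's parameters follows by interchanging the roles of $A$ and $B$.

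The step I expect to carry the real weight is the commutation identity $\phi_{a,b}(c_iud_i)=c_ivd_i$: it is precisely what converts the action of the unknown map on a basis into explicitly computable data, and it is exactly where the hypothesis that $A$ and $B$ commute elementwise is consumed. Everything else — constructing the basis and solving for the coordinates $\lambda_i$ — is the routine linear algebra already supplied by Lemma \ref{le:1}, so no further obstacle should arise.
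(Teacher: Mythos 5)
Your proof is correct, and it is the mirror image of the paper's argument rather than a copy of it. The paper runs the linear decomposition on the \emph{other} side: it applies Lemma \ref{le:1} to $A$ and $u$, obtains a basis $\{a_1ub_1,\dots,a_rub_r\}$ of $\mathrm{Lin}(AuA)$ with the words $a_i,b_i\in A$ retained, expresses the \emph{published} element as $v=\sum_{i}\alpha_i a_iub_i$ by Gaussian elimination, and then substitutes $w$ for $u$, declaring $z=\sum_i\alpha_i a_iwb_i$; the justifying chain $\sum_i\alpha_i a_i(cud)b_i=c\left(\sum_i\alpha_i a_iub_i\right)d=cvd=awb$ is exactly your commutation identity used in reverse. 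You instead decompose the element $w$ over a basis of $\mathrm{Lin}(BuB)$ and evaluate the secret map on that basis via $\phi_{a,b}(c_iud_i)=c_ivd_i$. Both routes consume the same ingredients: Lemma \ref{le:1}, elementwise commutativity of $A$ and $B$, and the assumption that multiplication by fixed elements of $G$ acts linearly on $V$ (i.e., that $V$ sits inside an ambient algebra such as $\mathrm{M}_n(\mathbb{F})$) --- an assumption you state explicitly and the paper leaves implicit. The trade-off is minor but real: the paper's version never needs a basis of $\mathrm{Lin}(BuB)$, only the membership $w\in BuB$, and once the coordinates $\alpha_i$ of $v$ are found, each new $w$ is handled by mere multiplications by the stored words $a_i,b_i$; your version solves a fresh linear system for each $w$, but in exchange it visibly computes $\phi_{a,b}(w)$ for \emph{every} $w$ in the span $\mathrm{Lin}(BuB)$, not only for genuine products $cud$, which is precisely the extension needed when a protocol publishes sums of such elements, as in the paper's Example 3.
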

\begin{proof}
Obviously $v \in AuA.$ Let  $E=\{a_1ub_1, ..., a_rub_r\}, a_i, b_i \in A,$ be a basis of Lin($AuA$), that is efficiently obtained by Lemma  \ref{le:1}. By the Gauss elimination process we get the unique expression 
\begin{equation}
\label{eq:3}
v = \sum_{i=1}^r\alpha_ia_iub_i, \   \alpha_i \in \mathbb{F}.
\end{equation}
All the values in the right hand side of (\ref{eq:3}) are known now. We substitute  to the right hand side of  (\ref{eq:3}) $w$ instead of $u$. Since elements of $A$ and $B$ are pairwise commuting we obtain
$$
\sum_{i=1}^r\alpha_ia_iwb_i = \sum_{i=1}^r\alpha_ia_icudb_i = c(\sum_{i=1}^r\alpha_ia_iub_i)d 
$$
\begin{equation}
\label{eq:4}
= cvd=caubd =a(cud)b=awb=z.
\end{equation}
\end{proof}

Now we formulate a mnemonic rule of an efficient constructing  of a specific element following by the Lemmas \ref{le:1} and  \ref{le:2}:
$$
v=\phi_{a,b}(u) \  (a, b \in A)\   \& \   w\in BuB \   \Rightarrow \    \phi_{a,b}(w);
$$
\begin{equation}
\label{eq:5}
v=\phi_{c,d}(u) \ (c,d \in B) \   \& \   w\in AuA \ \Rightarrow  \  \phi_{c,d}(w).
\end{equation}

It means that if we find a basis of the underlined linear subspace by the Lemma  \ref{le:1} then  from the elements  $u$ and $v$ in the left hand side of the corresponding part of the rule, we can efficiently construct the image of  $w$ in the right hand side of the rule.

\section{Examples}

\paragraph{\bf Example 1.}
We describe the protocol 1 by Wang et al. \cite{WW}. In this protocol one of the Artin braid groups  is proposed as the platform. 

Let $B_n$ denote the Artin braid group on $n$ strings, $n \in \mathbb{N}.$ R. Lawrence described in 1990 a family of so called Lawrence representations of $B_n.$ Around 2001 S. Bigelow \cite{Big} and D. Krammer \cite{Kr} independently proved that all braid groups $B_n$ are linear. Their work used the  Lawrence-Krammer representations $\rho_n : B_n \rightarrow GL_{n(n-1)/2}(\mathbb{Z}[t^{\pm 1}, s^{\pm 1}])$ that   has been proved faithful for every $n\in \mathbb{N}.$  One can effectively find the image $\rho_n (g)$ for every element $g \in B_n.$ 
Moreover, there exists an effective procedure to recover a braid $g \in B_n$ from its image $\rho_n (g).$ It was shown in \cite{CJ} that it can be done in $O(2m^3log d_t)$ multiplications of entries in $\rho_n (g).$ Here $m = n(n-1)/2$ and $d_t$ is a parameter  that can be effectively computed by $\rho_n (g).$  See \cite{CJ} for details.  

 Thus we can assume that the platform  $G$ is a part of a finite dimensional linear space $V.$

Alice and Bob agree about  group $G$ and a random  element  $h\in G$, as well as about  two finitely generated  subgroups    $ A $ and $B $  such that   $ab=ba$ for every pair   $a \in A$ and $b \in B.$ These data are public. 

The algorithm  works as follows.

Alice chooses four elements: $c_1,c_2,d_1,d_2   \in  A,$ then computes and publishes   
$x = d_1 c_1hc_2 d_2$ for Bob.

Bob chooses six elements:  $f_1,f_2,g_1,g_2,g_3,g_4 \in B,$ then computes and publishes  $y = g_1 f_1hf_2 g_2$ and $w = g_3 f_1xf_2 g_4,$ for Alice. 
 
Alice picks up two elements:  $d_3,d_4 \in A,$ then computes and publishes  
 $z = d_3 c_1yc_2 d_4$ and $u = d_1^{-1}wd_2^{-1},$ for Bob.

Bob computes and publishes   $v = g_1^{-1}zg_2^{-1}$ for Alice.  

Alice computes the key $K_A = d_3^{-1}vd_4^{-1}=c_1 f_1hf_2 c_2.$

Bob computes the key  $K_B = g_3^{-1}ug_4^{-1} = c_1 f_1hf_2 c_2$, that is equal to $K_A.$  

Now Alice and Bob have the common secret key  $K = K_A = K_B$.  

\paragraph
{\bf Cryptanalysis.}

The following transformations were used in the protocol:
\begin{equation}
\label{eq:6}
\phi_{d_1c_1,c_2d_2}, \phi_{g_1f_1,f_2g_2}, \phi_{g_3f_1, f_2g_4}, \phi_{d_3c_1, c_2d_4}, \phi_{d_1,d_2}^{-1},
\phi_{g_1,g_2}^{-1}.
\end{equation}  

\bigskip
By direct computation we get an expression  of   $K$: 
\begin{equation}
\label{eq:7}
K = \phi_{c_1f_1, f_2c_2}(h) =  \phi_{d_1,d_2}^{-1}(\phi_{d_1c_1,c_2d_2}(\phi_{g_1,g_2}^{-1}(\phi_{g_1f_1,f_2g_2}(h)))).
\end{equation}
We are going to show that the key $K$ can be efficiently obtained by Lemmas  \ref{le:1}  and \ref{le:2}.  

The output of the first transformation  $y=\phi_{g_1f_1,f_2g_2}(h)$ is public. 

 The output of the second transformation  $\phi_{g_1,g_2}^{-1}(y)$ can be efficiently obtained by the mnemonic rule:  $$v= \phi_{g_1,g_2}^{-1}(z) \ \&
\  y \in AzA \ \Rightarrow \  \phi_{g_1,g_2}^{-1}(y)=f_1hf_2.$$

The output of the third transformation can be efficiently obtained by the mnemonic rule:   $$x= \phi_{d_1c_1,c_2d_2}(h) \ \&
\  f_1hf_2 \in BhB \ \Rightarrow \  \phi_{d_1c_1,c_2d_2}(f_1hf_2)=d_1c_1f_1hf_2c_2d_2.$$

The output of the fourth transformation can be efficiently obtained by the mnemonic rule:  $$u=\phi_{d_1,d_2}^{-1}(w) \ \&
\  d_1c_1f_1hf_2c_2d_2 \in BwB \ \Rightarrow \  \phi_{d_1,d_2}^{-1}(d_1c_1f_1hf_2c_2d_2)$$
$$=c_1 f_1hf_2 c_2=K.$$
Thus we have $K.$
\paragraph{\bf Example 2.} Well known protocol by Ko et al.  \cite{KL} usually is called  {\it noncommutative analog of Diffie-Hellman protocol}. The authors of \cite{KL} proposed one of the Artin braid groups  $B_n,$ $ n \in \mathbb{N},$  as a platform. On the matrix representation of $B_n$ see the previous example. 

The agreements about  $G=B_n$,  $h \in G,$ and   $A$ and $B$  are the same as in the previous example.

Now the algorithm works as follows. 

Alice chooses randomly  $a  \in  A,$ computes and publishes   
$h^a=aha^{-1}$ for Bob.

Bob picks up randomly $b  \in  B,$ computes and publishes   
$h^b=bhb^{-1}$ for Alice.

Alice computes the key $K_A = (h^b)^a = h^{ab}.$

Bob computes the key $K_B = (h^a)^b = h^{ba}.$

Since $ab=ba$  they get the exchanged secret key $K=K_A = K_B.$

\paragraph
{\bf Cryptanalysis.}

We see that 
\begin{equation}
\label{eq:8}
K = abha^{-1}b^{-1} =\phi_{a,a^{-1}}( \phi_{b,b^{-1}}(h)).
\end{equation} 

The output of the first transformation  $h^b =\phi_{b,b^{-1}}(h)$ is public.  

The output of the second transformation can be efficiently obtained by the mnemonic rule:
$$h^a = \phi_{a,a^{-1}}(h)\  \& \  h^b \in BhB \  \Rightarrow \   \phi_{a,a^{-1}}(h^b) = K.$$
Thus we have $K.$

\paragraph{\bf Example 3.} We describe the protocol proposed by B. and T. Harley \cite{HH}, \cite{H}. 
Let $G$ be a finitely generated commutative subgroup of the general linear group    GL$_n(\mathbb{F})$ over a field $\mathbb{F}.$ These data are public.

The algorithm works as follows.

Bob chooses randomly $y \in \mathbb{F}^n$ and $b \in G$, computes and publishes    $yb$.

Alice wants to send a message  $x \in \mathbb{F}^n$ to Bob. She chooses randomly     $a_1, a \in G,$  then computes and publishes    $(xa, yba_1)$ for 
Bob.

Bob chooses randomly  $b_1, b_2 \in G$, then computes and publishes   $(xab_1, ya_1b_2),$ for Alice. 

Alice computes   $(xb_1, yb_2)$ and then publishes  $xb_1-yb_2$ for Bob.

Bob computes  $x - yb_2b_1^{-1} $ and then recovers $x.$

Bob can use    $yb$ in forthcoming sessions.

\paragraph
{\bf Cryptanalysis.} 

Since $G$ is commutative we can assume that the correspondences use arbitrary right hand side multiplications   $\rho_c = \phi_{1,c}, c \in G$, and that $A=B=G.$   The arguments of the Lemma \ref{le:1} allow to construct bases of subspaces of the form  Lin($gG$), $g \in G$.  The assumption  $w\in BuB$  of the Lemma  \ref{le:2} automatically done.  Now the mnemonic rule is simpler.  The group  $G$ generates a finite dimensional subspace Lin$(G)$ of  GL$_n(\mathbb{F})$. Every $\rho_c$  is uniquely extended to Lin$(G)$. It follows that the secret element in the protocol can be efficiently computed as in the previous examples.    

In the protocol, the public  elements  $yb, xa$  and $xb_1 - yb_2$ are given, the transformations  $\rho_{b_1}, \rho_{a_1}, \rho_{a_1b_2b^{-1}}$ have been used. The first transformation corresponds to $(xa, xab_1)$,   the second transformation corresponds to ($yb, yba_1)$, and the third transformation corresponds to  $(yb, ya_1b_2)$.

Then we have:
\begin{equation}
\label{eq:7}
x = \rho_{b_1}^{-1}(xb_1-yb_2)+ \rho_{b_1}^{-1}(\rho_{a_1}^{-1}(ya_1b_2)).
\end{equation}
Both terms are efficiently computed by the mnemonic rule. 

\section{Complexity of the proposed algorithms}

The algorithm described in the Lemma \ref{le:1} gives a base of the subspace of the given finitely dimensional linear space.  It can be done by the Gauss elimination process.  In every case we need to determine existing of a solution only.  Note that the Gauss algorithm runs for matrix of size   $(t\times s)$ in  $O(t^2s)$ steps. Let $r$ be the dimension of  $V.$ Then $r$ is a number of equations in each of the considering systems of linear equations. The number of variables doesn't exceed  $r$ because it is equal to the number previously included basic elements.  Hence, every time we use  not more than  $O(r^3)$ operations. The total number of considering lists doesn't exceed  $r$ because every such list adds at least one new basic elements to the constructing base.  Every list contains not more than   $4k^2r$ elements. The total number of such elements doesn't exceed  $2k^2r^2$. Hence we have (very crude) an upper bound  $O(k^2r^5)$ of such operations. 

In some cases we need to estimate a number of operations giving some previous representation of the platform group $G$ by matrices, as well as a number of operations needed to compute the inverse map to $G.$ For example, in 
 \cite{CJ} was shown that the standard form of any element $g$ of $B_n$ can be recovered with its linear image by not more than   $O(n^3log_2d_t)$ operations. Here   $d_t$ is some efficiently computed parameter depending from $g.$ Note, that the linear image of $g$ is determined in linear time with respect to the length of $g.$

In the algorithm of the Lemma \ref{le:2} we use the Gauss elimination process. In the case, this process finds the unique solution. Since a total number of base constructions is not more than the total number of  computations in this base,  the estimation keeps its form:  $O(k^2r^5).$

\end{document}